\numberwithin{algorithm}{section}
\newtheorem{defn}{Definition}
\newtheorem{thm}{Theorem}
\newtheorem{cor}{Corollary}
\newtheorem{rem}{Remark}
\newtheorem{lem}{Lemma}
\title{\LARGE \bf
Exact and Efficient Hamilton-Jacobi Reachability for Decoupled Systems
}
\author{Mo~Chen and Claire~J.~Tomlin% <-this % stops a space
\thanks{This work is supported in part by NSF under CPS:ActionWebs (CNS-0931843) and CPS:FORCES (CNS1239166), by NASA under grants NNX12AR18A and UCSCMCA-14-022 (UARC), by ONR under grants N00014-12-1-0609, N000141310341 (Embedded Humans MURI), and MIT\_5710002646 (SMARTS MURI), and by AFOSR under grants UPenn-FA9550-10-1-0567 (CHASE MURI) and the SURE project. The research of M. Chen has received funding from the ``NSERC PGS-D'' Program.}% <-this % stops a space
\thanks{M.~Chen, and C.~J.~Tomlin are with the Department of Electrical Engineering and Computer Sciences,
        University of California, Berkeley, CA 94720, USA
        {\tt\small \{mochen72,tomlin\}@eecs.berkeley.edu}}
}
\begin{document}

\maketitle

\thispagestyle{empty}
\pagestyle{empty}

\begin{abstract}
Reachability analysis is important for studying optimal control problems and differential games, which are powerful theoretical tools for analyzing and modeling many practical problems in robotics, aircraft control, among other application areas. In reachability analysis, one is interested in computing the reachable set, defined as the set of states from which there exists a control, despite the worst disturbance, that can drive the system into a set of target states. The target states can be used to model either unsafe or desirable configurations, depending on the application. Many Hamilton-Jacobi formulations allow the computation of reachable sets; however, due to the exponential complexity scaling in computation time and space, problems involving approximately 5 dimensions become intractable. A number of methods that compute an approximate solution exist in the literature, but these methods trade off complexity for optimality. In this paper, we eliminate complexity-optimality trade-offs for time-invariant decoupled systems using a decoupled Hamilton-Jacobi formulation that enables the exact reconstruction of high dimensional solutions via low dimensional solutions of the decoupled subsystems. Our formulation is compatible with existing numerical tools, and we show the accuracy, computation benefits, and an application of our novel approach using two numerical examples.
\end{abstract}

% !TEX root = decoupledHJI.tex
\section{Introduction}
\label{sec:intro}
Optimal control problems and differential games have been extensively studied \cite{Mitchell05,Tomlin00, Basar99, Lygeros99}, and have received growing interest in the recent past. These powerful theoretical tools allow us to analyze a variety of real world problems, including path planning, collision avoidance, safety verification, among other applications in robotics, aircraft control, security, and other domains \cite{DotAF09,Madrigal09,Erzberger06,Tice91}.

In an optimal control problem, one aims to drive a controlled dynamical system into a set of states called the target set; depending on the application, the target set can model the set of either desirable or undesirable configurations. In a reachability framework, one aims to determine the backwards reachable set, defined as the set of states from which a control exists to drive the system into the target set. Differential games involve two adversarial players (Player 1 and Player 2). Player 2 seeks to drive a system to a target set, while Player 1 seeks to prevent Player 2 from doing so. One again aims to determine the backwards reachable set, which in this case is defined as the set of states from which a control from Player 2 exists to drive the system into the target set, despite the optimal adversarial control from Player 1.

Reachability is an effective way to analyze optimal control problems and differential games because it provides guarantees on system performance and safety. Reachability problems involving one player can be posed as a minimum (maximum) cost game where the player minimizes the minimum value over time of some cost function representing the proximity to the target set. In the case of a differential game, Player 1 maximizes the minimum cost over time, while Player 2 minimizes it. \cite{Mitchell05} has shown that the backwards reachable set can be obtained by solving a Hamilton-Jacobi Partial Differential Equation (HJ PDE) with a terminal condition specifying the target set. Many similar formulations of the backwards reachability problems also exist \cite{Fisac15,Bokanowski10, Barron90}. HJ reachability has been successfully used to solve problems such as aircraft collision avoidance \cite{Mitchell05}, automated in-flight refueling \cite{Ding08}, and reach-avoid games \cite{Chen14,Huang11}.

The techniques for computing backwards reachable sets via solving an HJ PDE are very flexible and can be applied to a large variety of system dynamics when the problem dimensionality is low. Furthermore, many numerical tools have been developed to solve these equations, making the HJ approach practically appealing \cite{LSToolbox, Mitchell08, Osher02, Sethian96}. For higher dimensional problems, various techniques such as those involving projections \cite{Mitchell11,Mitchell03}, approximate dynamic programming \cite{McGrew08}, and occupation measure theory \cite{Lasserre08} have been proposed. While these approximation techniques alleviate the computation complexity, they give up optimality and sometimes give overly conservative results.

This paper resolves the complexity-optimality trade-off for time-invariant systems with decoupled dynamics. We present a decoupled formulation of HJ reachability for decoupled systems, defined in \eqref{eq:dyn_decoupled}. By considering the decoupled component separately and solving lower dimensional HJ PDEs for each subsystem, we reduce the computation time and space complexity substantially. Our approach also \textit{exactly} recovers the solution to the original, high dimensional PDE.
% !TEX root = decoupledHJI.tex
\section{Problem Formulation}
\label{sec:formulation}
Consider a differential game between two players described by the time-invariant system
\begin{equation} \label{eq:dyn}
\dot{z} = f(z, u, d), \text{ almost every } t\in [-T,0],
\end{equation}

\noindent where $z\in\mathbb{R}^n$ is the system state, $u\in \mathcal{U}$ is the control of Player 1, and $d\in\mathcal{D}$ is the control of Player 2. We assume $f:\mathbb{R}^n\times \mathcal{U} \times \mathcal{D} \rightarrow \mathbb{R}^n$ is uniformly continuous, bounded, and Lipschitz continuous in $z$ for fixed $u,d$, and the control functions $u(\cdot)\in\mathbb{U},d(\cdot)\in\mathbb{D}$ are drawn from the set of measurable functions\footnote{A function $f:X\to Y$ between two measurable spaces $(X,\Sigma_X)$ and $(Y,\Sigma_Y)$ is said to be measurable if the preimage of a measurable set in $Y$ is a measurable set in $X$, that is: $\forall V\in\Sigma_Y, f^{-1}(V)\in\Sigma_X$, with $\Sigma_X,\Sigma_Y$ $\sigma$-algebras on $X$,$Y$.}. As in \cite{Mitchell05,Evans84,Varaiya67}, we allow Player 2 to only use nonanticipative strategies $\gamma$, defined by

\begin{equation}
\begin{aligned}
\gamma \in \Gamma &:= \{\mathcal{N}: \mathbb{U} \rightarrow \mathbb{D} \mid  \\
& u(r) = \hat{u}(r) \text{ for almost every } r\in[t,s] \\
&\Rightarrow \mathcal{N}[u](r) = \mathcal{N}[\hat{u}](r) \text{ for almost every } r\in[t,s]\}
\end{aligned}
\end{equation}

We further assume that the system is a \textit{decoupled system}.
\begin{defn}
\textbf{Decoupled system}. A system \eqref{eq:dyn} is a decoupled system if it can be split into $N$ components, denoted $\{x_i\}_{i=1}^N$ where $z = (x_1, \ldots, x_N)$, that satisfy the following:

\begin{equation} \label{eq:dyn_decoupled}
\begin{aligned}
\dot{x_i} &= f_i(x_i, u_i, d_i), \text{ almost every } t\in [-T,0], \\
i &= 1,\ldots, N,
\end{aligned}
\end{equation}

\noindent where $x_i\in\mathbb{R}^{n_i}$ is $i$th component of the full state, $u_i\in \mathcal{U}_i$ is $i$th component of the control of Player 1, and $d_i\in\mathcal{D}_i$ is $i$th component of the control of Player 2. Based on this assumption and the assumptions on $f(\cdot,\cdot,\cdot),u(\cdot),d(\cdot)$, we have that $f_i:\mathbb{R}^{n_i}\times \mathcal{U}_i \times \mathcal{D}_i \rightarrow \mathbb{R}^{n_i}$ is uniformly continuous, bounded, and Lipschitz continuous in $x_i$ for fixed $u_i,d_i$ and $u_i(\cdot),d_i(\cdot)$ are measurable. Note that $\sum_{i=1}^N n_i = n$. 
\end{defn}

Denote system trajectories, which are solutions to \eqref{eq:dyn}, as
\begin{equation}
\xi_f(s; z, t, u(\cdot), d(\cdot)): [t,0]\rightarrow \mathbb{R}^n.
\end{equation}

$\xi_f$ satisfies initial conditions $\xi_f(t; z, t, u(\cdot), d(\cdot))=z$ and the following differential equation almost everywhere
\begin{equation}
\frac{d}{ds}\xi_f(s; z, t, u(\cdot), d(\cdot)) = f(\xi_f(s; z, t, u(\cdot), d(\cdot)), u(s), d(s))
\end{equation}

In our differential game, the goal of Player 2 is to drive the system into some target set $\mathcal{L}$, and the goal of Player 1 is to drive the system away from it. The set $\mathcal{L}$ is represented as the zero sublevel set of a bounded, Lipschitz continuous function $l:\mathbb{R}^n\rightarrow\mathbb{R}$, $\mathcal{L}=\{z\in\mathbb{R}^n \mid l(z)\le 0\}$.

Such a function always exists, since we can choose $l(\cdot)$ to be a signed distance function; we call $l(\cdot)$ the implicit surface function representing the set $\mathcal L$. In accordance with our decoupled dynamics, we assume that $l$ can be represented as a maximum of $N$ bounded, Lipschitz continuous functions $l_i:\mathbb{R}^{n_i}\rightarrow \mathbb{R}$, $l(z) = l(x_1,\ldots,x_N)=\max_i l_i(x_i)$ where $l_i(x_i)$ are implicit surface functions representing $\mathcal{L}_i$ so that $z\in\mathcal{L} \Leftrightarrow x_i\in\mathcal{L}_i \ \forall i$. Note that with the definition of $l(z)$ and $l_i(x_i)$, we have that 

\begin{equation} \label{eq:decoupled_target}
\mathcal{L} = \bigcap_i \mathcal L_i.
\end{equation}

Given the decoupled system \eqref{eq:dyn_decoupled} and the target set $\mathcal{L}$ in the form \eqref{eq:decoupled_target} represented by $l(\cdot)$, our goal in this paper is to compute the backwards reachable set, $\mathcal{V}(t)$, in the low-dimensional space $\mathbb{R}^{n_i}$ of each of the decoupled components $x_i$ as opposed to in the full system state space $\mathbb{R}^n$. $\mathcal{V}(t)$ is defined as

\begin{equation}
\begin{aligned}
\mathcal{V}(t) &:= \{z\in\mathbb{R}^n \mid \exists \gamma\in\Gamma \text{ such that} \\
&\forall u(\cdot)\in\mathbb{U}, \exists s \in [t,0], \xi_f(s; z, t, u(\cdot), \gamma[u](\cdot) \in \mathcal{L}) \}
\end{aligned}
\end{equation}

\begin{rem} One may have noticed that if $\mathcal{L} = \bigcup_i \mathcal L_i$, one would be able to simply find $\mathcal V_i(t)$ by solving \eqref{eq:HJIPDE} with $\mathcal L_i$ as the target set, and then obtain $\mathcal V(t) = \bigcup_i \mathcal V_i(t)$. However, it is crucial to observe that we are interested in the case where $\mathcal{L} = \bigcap_i \mathcal L_i$, in which a simple union of $\mathcal V_i(t)$ would \textit{not} yield the correct reachable set $\mathcal V(t)$.
\end{rem}
% !TEX root = decoupledHJI.tex
\section{Solution} \label{sec:solution}
\subsection{HJ Reachability: Full Formulation} \label{subsec:full}
In \cite{Mitchell05}, the authors showed that the backwards reachable set $\mathcal{V}(t)$ can be obtained as the zero sublevel set of the viscosity solution \cite{Crandall83} $V(t,z)$ of the following terminal value HJ PDE:

\begin{equation} \label{eq:HJIPDE}
\begin{aligned}
D_t V(t,z) + \min \{0, \max_{u\in\mathcal{U}} \min_{d\in\mathcal{D}} D_z V(t,z) \cdot f(z,u,d)]\} &= 0 \\
V(0,z) = l(z)&
\end{aligned}
\end{equation}

\noindent from which we obtain $\mathcal{V}(t) = \{z\in\mathbb{R}^n \mid V(t,z)\le 0\}$ from the bounded, Lipschitz function $V(t,z)$ that is also continuous in both $z$ and $t$ \cite{Evans84}.

\cite{Mitchell05} and similar approaches, such as \cite{Fisac15,Bokanowski10, Barron90}, are compatible with well-established numerical methods \cite{LSToolbox, Mitchell08, Osher02, Sethian96}. However, these approaches become intractable quickly as the dimensionality of the problem $n$ increases. Numerically, the solution $V(t,z)$ is computed on a grid, and the number of grid points increases exponentially with the number of dimensions.

Decoupled dynamics allow for tractable or faster computation of reachable sets in the individual decoupled components. Some authors \cite{Mitchell11,Mitchell03} have proposed methods for combining or stitching together these reachable set components into the full reachable set. These methods work reasonably well, but introduce conservatism in various ways. In the next subsections, we will provide a method for combining solutions to the lower dimensional HJ PDEs to construct the \textit{exact} full solution in the original HJ PDE.

\subsection{HJ Reachability: Decoupled Formulation} \label{subsec:decoupled}
Observe that \eqref{eq:HJIPDE} can be viewed as an equation involving two cases. Depending on which of the arguments in the outer-most minimum is active, $\eqref{eq:HJIPDE}$ becomes one of \eqref{eq:HJIPDE_0} or \eqref{eq:HJIPDE_H}:

\begin{equation} \label{eq:HJIPDE_0}
D_t V(t,z) = 0
\end{equation}

\begin{equation} \label{eq:HJIPDE_H}
D_t V(t,z) + \max_{u\in\mathcal{U}} \min_{d\in\mathcal{D}} D_z V(t,z) \cdot f(z,u,d)= 0
\end{equation}

This motivates us to define the following sets $\mathcal{F}_1(t), \mathcal{F}_2(t)$ which characterize which of the outer-most minimum operation is active in \eqref{eq:HJIPDE}.

\begin{equation} \label{eq:freeze_regions}
\begin{aligned}
\mathcal{F}_1(t) &= \{z\in\mathbb{R}^n \mid \max_{u\in\mathcal{U}} \min_{d\in\mathcal{D}} D_z V(t,z) \cdot f(z,u,d) > 0\} \\
\mathcal{F}_2(t) &= \{z\in\mathbb{R}^n \mid \max_{u\in\mathcal{U}} \min_{d\in\mathcal{D}} D_z V(t,z) \cdot f(z,u,d) \le 0\}
\end{aligned}
\end{equation}

Note that $\mathcal{F}_1(t)$ is the complement of $\mathcal{F}_2(t)$, $\mathcal F_1(t) = \mathcal{F}^C_2(t)$, for all time. At a given $t$, in $\mathcal{F}_1(t)$, $V(t,z)$ satisfies \eqref{eq:HJIPDE_0}; in $\mathcal{F}_2(t)$, $V(t,z)$ satisfies \eqref{eq:HJIPDE_H}. We now show an important property of $\mathcal{F}_1(t)$ and $\mathcal{F}_2(t)$ in the Lemma and Corollary below. These will be used to show that our proposed decoupled formulation allows exact computation of $V(t,z)$, by computation of $V_i(t,x_i)$, value functions of lower dimensional spaces.

\begin{lem} \label{lem:freeze_regions}
$z\in\mathcal{F}_1(-t_0) \Rightarrow z\in\mathcal{F}_1(t) \ \forall t \in [-T, -t_0]$ for some $t_0$ such that $0<t_0<T$. 
\end{lem}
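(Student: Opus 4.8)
The plan is to reduce the statement to the time-profile of the value function at a fixed state and then exploit the monotonicity that \eqref{eq:HJIPDE} builds in. Abbreviate the inner Hamiltonian from \eqref{eq:freeze_regions} by $H(t,z):=\max_{u\in\mathcal U}\min_{d\in\mathcal D} D_zV(t,z)\cdot f(z,u,d)$. Since $\min\{0,H\}\le 0$, the PDE gives $D_tV(t,z)=-\min\{0,H(t,z)\}=\max\{0,-H(t,z)\}\ge 0$, so for every fixed $z$ the map $t\mapsto V(t,z)$ is nondecreasing. By the case split in \eqref{eq:HJIPDE_0}--\eqref{eq:HJIPDE_H}, $z\in\mathcal F_1(t)$ activates \eqref{eq:HJIPDE_0}, giving $D_tV(t,z)=0$, while $z\in\mathcal F_2(t)$ activates \eqref{eq:HJIPDE_H}, giving $D_tV(t,z)=-H(t,z)\ge 0$. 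First I would record these identities together with the boundedness of $V$.

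The core of the argument is that membership in $\mathcal F_1$ means the value has already settled to its floor. By monotonicity and boundedness, $V_\infty(z):=\lim_{t\downarrow -T}V(t,z)$ exists, and I claim $z\in\mathcal F_1(-t_0)$ forces $V(-t_0,z)=V_\infty(z)$, the heuristic being that a value still strictly decreasing below $-t_0$ cannot coexist with a frozen value at $-t_0$. I would make this precise by a contradiction at $\bar t:=\sup\{t\in[-T,-t_0]\mid z\in\mathcal F_2(t)\}$: on $(\bar t,-t_0]$ the point is frozen, so $V(\cdot,z)$ is constant and equals $V_\infty(z)$ by continuity, and the vanishing one-sided time-derivative at $\bar t$ from the right is incompatible with the strict decrease ($D_tV=-H>0$) that $z\in\mathcal F_2(\bar t)$ with $H<0$ would impose just below it. Once $V(-t_0,z)=V_\infty(z)$ is established, monotonicity squeezes $V(t,z)=V_\infty(z)$ for all $t\in[-T,-t_0]$, whence $D_tV(t,z)=0$ and therefore $\min\{0,H(t,z)\}=0$, i.e.\ $H(t,z)\ge 0$, on the whole interval.

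I expect the genuine difficulty to lie in upgrading $H(t,z)\ge 0$ to the strict inequality $H(t,z)>0$ that defines $\mathcal F_1(t)$ in \eqref{eq:freeze_regions}; the previous step only excludes the interior of $\mathcal F_2$ and leaves open the borderline set $\{H=0\}$, which is exactly the shared boundary of $\mathcal F_1(t)$ and $\mathcal F_2(t)$. A second, intertwined difficulty is regularity: $H(t,z)$ is assembled from the spatial gradient $D_zV(t,z)$, which for a viscosity solution of \eqref{eq:HJIPDE} need not be continuous in $t$, so the derivative identities above must be read in the viscosity (sub/super-differential) sense. To control both, I would avoid differentiating $V$ and instead work from the control representation $V(t,z)=\inf_{\gamma\in\Gamma}\sup_{u(\cdot)}\inf_{s\in[t,0]}l\big(\xi_f(s;z,t,u(\cdot),\gamma[u](\cdot))\big)$: time-invariance of \eqref{eq:dyn} yields both the monotonicity and the floor characterization directly, and the strict sign $H>0$ at a floor state is obtained from the strict ``cannot improve'' property of Player~1's optimal response there, with the $H=0$ boundary recovered as a limit of strictly frozen times.
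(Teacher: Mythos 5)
Your reduction to the time-profile $t \mapsto V(t,z)$ at the fixed state $z$ is exactly where the argument gets stuck, and you have located the obstruction yourself: everything you can extract from $D_t V(t,z) = -\min\{0, H(t,z)\} \ge 0$ is the non-strict inequality $H(t,z) \ge 0$, because $H=0$ and $H>0$ are indistinguishable at the level of the time derivative (both give $D_t V = 0$). Since \eqref{eq:freeze_regions} puts the borderline set $\{H=0\}$ inside $\mathcal{F}_2(t)$, the conclusion $z\in\mathcal{F}_1(t)$ does not follow, and no refinement of the first-crossing argument at the single point $z$ can fix this: $H(t,z)$ is a function of the \emph{spatial} gradient $D_z V(t,z)$, about which pointwise-in-$z$ monotonicity of $V(\cdot,z)$ says nothing. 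There is a second hole in the same step: the supremum $\bar t$ need not be attained, and since $D_z V$ (hence $H$) is not known to be continuous in $t$, you cannot pass to the limit along times $t_k \uparrow \bar t$ with $z\in\mathcal{F}_2(t_k)$. Finally, the proposed patch via the control representation is a gesture rather than an argument; the ``strict cannot-improve property'' of Player 1's response is precisely the statement $H>0$ that needs proof, so as written it is circular.

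The paper closes exactly this gap by a spatial rather than temporal mechanism. Because $f$ is time-invariant, the Hamiltonian depends on $t$ only through $D_z V(t,z)$; hence $z\in\mathcal{F}_2(-t_1)$ (i.e.\ $H(-t_1,z)\le 0$, \emph{including} the borderline case) together with $H(-t_0,z)>0$ forces $D_z V(-t_1,z)\neq D_z V(-t_0,z)$, and therefore $V(-t_1,\cdot)$ and $V(-t_0,\cdot)$ must differ at some $z_0$ in every neighborhood of $z$. Openness of $\mathcal{F}_1(-t_0)$ gives a whole spatial neighborhood on which $V$ is frozen in time, so this difference becomes a jump of $V(\cdot,z_0)$ at $-t_1$, contradicting Lipschitz continuity of $V$. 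The contradiction is run against membership in $\mathcal{F}_2(-t_1)$ as a whole, so strictness never becomes an issue. (Your worry about viscosity-solution regularity is legitimate but not the distinguishing defect: the paper's proof manipulates $D_z V$ and $D_t V$ at the same level of formality.) To salvage your route you would need to import this neighborhood/gradient step or an equivalent; your monotonicity observation is correct and could streamline the setup, but by itself it cannot separate $\mathcal{F}_1(t)$ from the $\{H=0\}$ boundary of $\mathcal{F}_2(t)$.
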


\begin{proof}
Suppose $z\in\mathcal{F}_1(-t_0)$, then by \eqref{eq:HJIPDE_0} we have the following: 

\begin{itemize}
\item $D_t V(-t_0,z) = 0$. Thus, $V(t,z)$ becomes independent of $t$ at $t=-t_0$.
\item since $\mathcal{F}_1$ is an open set, there exists a neighborhood around $z$ that is contained in $\mathcal{F}_1$. Thus $V(t,z)$ is also independent of $t$ in a neighborhood of $z$.
\item By \eqref{eq:freeze_regions}, we have $\max_{u\in\mathcal{U}} \min_{d\in\mathcal{D}} D_z V(-t_0,z) \cdot f(z,u,d) > 0$. 
\end{itemize}

Let $t_1\in (t_0, T]$ and suppose $z\in\mathcal{F}_2$ at $t=-t_1$. Then, by \eqref{eq:freeze_regions}, $\max_{u\in\mathcal{U}} \min_{d\in\mathcal{D}} D_z V(-t_1,z) \cdot f(z,u,d) \le 0$. Since $f$ is independent of $t$, this necessarily means that $D_z V(-t_1,z) \neq D_z V(-t_0,z)$.

This implies $\exists z_0$ in a neighborhood of $z$ such that $V(-t_1,z_0) \neq V(-t_0,z_0)$.
%This leads to $\max_{u\in\mathcal{U}} \min_{d\in\mathcal{D}} D_z V(t,z) \cdot f(z,u,d) > 0 \ \forall t\le -t_0$.%, since otherwise, let $t_1<t_0$ be the largest time such that $\max_{u\in\mathcal{U}} \min_{d\in\mathcal{D}} D_z V(t_1,z) \cdot f(z,u,d) < 0$. This would imply that $V(t_1,z) \neq V(t_0,z)$, or 

\begin{equation}
V(-t_1,z_0) - V(-t_0,z_0) =\Delta \neq 0
\end{equation}

However, by \eqref{eq:HJIPDE_0}, $V(t,z_0) = V(-t_0,z_0) \ \forall t\in (-t_1, -t_0]$. In particular, then, we have for any $\epsilon>0$

\begin{equation}
\frac{V(-t_1,z_0) - V(-t_1+\epsilon,z_0)}{\epsilon} = \frac{\Delta}{\epsilon}.
\end{equation}

This means that $\forall M\in\mathbb{R}$, $\exists \epsilon>0$ such that 

\begin{equation}
\frac{V(-t_1,z_0) - V(-t_1+\epsilon,z_0)}{\epsilon} > M,
\end{equation}

\noindent which is a contradiction since $V(t,z_0)$ is Lipschitz continuous. Therefore, since $\max_{u\in\mathcal{U}} \min_{d\in\mathcal{D}} D_z V(t,z) \cdot f(z,u,d) \ge 0 \ \forall t\le -t_0$, we have that $z\in\mathcal{F}_1(t) \ \forall t \in [-T, -t_0]$ by \eqref{eq:freeze_regions}.

%
%\noindent which by the mean value theorem and the fact that $V(t,z)$ is continuous in $t$, implies $\exists s \in (t_1, t_0), D_t V(s,z) =\Delta $
%
%Also, note that as long as $z\in\mathcal{F}_1(t)$, $V(t,z)$ must be independent of $t$. the expression $\max_{u\in\mathcal{U}} \min_{d\in\mathcal{D}} D_z V(t,z) \cdot f(z,u,d)$ is also independent of $t$. This implies $V(t,z) = V(t_0,z)$.
%
%Since by \eqref{eq:freeze_regions}, $\max_{u\in\mathcal{U}} \min_{d\in\mathcal{D}} D_z V(t,z) \cdot f(z,u,d) \ge 0$ in $\mathcal{F}_1(t)$, we must have $\max_{u\in\mathcal{U}} \min_{d\in\mathcal{D}} D_z V(t,z) \cdot f(z,u,d)\ge 0 \forall t\ge t_0$. Therefore $z\in\mathcal{F}_1(t) \forall t\ge t_0$.
\end{proof}

\begin{cor} \label{cor:freeze_regions}
$z\in\mathcal{F}_2(-t_1) \Rightarrow z\in\mathcal{F}_2(t) \ \forall t\in [-t_1,0]$.
\end{cor}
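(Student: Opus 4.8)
The plan is to obtain the Corollary as the contrapositive of Lemma~\ref{lem:freeze_regions}, exploiting the fact stated in the text that $\mathcal{F}_1(t)$ and $\mathcal{F}_2(t)$ partition the state space for every $t$, i.e. $\mathcal{F}_1(t) = \mathcal{F}^C_2(t)$. For a fixed state $z$, Lemma~\ref{lem:freeze_regions} asserts that the set of times at which $z$ lies in $\mathcal{F}_1$ is closed under moving backward in time (``down-closed''): membership in $\mathcal{F}_1$ at $-t_0$ forces membership at every earlier $t\in[-T,-t_0]$. Taking complements, the set of times at which $z$ lies in $\mathcal{F}_2$ should then be closed under moving forward in time (``up-closed''), which is precisely the assertion of the Corollary.

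Concretely, I would argue by contradiction. Suppose $z\in\mathcal{F}_2(-t_1)$ but, contrary to the claim, there exists $t^\ast\in[-t_1,0]$ with $z\notin\mathcal{F}_2(t^\ast)$. Since $\mathcal{F}_1(t^\ast)=\mathcal{F}^C_2(t^\ast)$, this means $z\in\mathcal{F}_1(t^\ast)$. Writing $t^\ast = -t_0$ with $0<t_0<T$, I would invoke Lemma~\ref{lem:freeze_regions} at this time to conclude $z\in\mathcal{F}_1(t)$ for all $t\in[-T,-t_0]=[-T,t^\ast]$.

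The key ordering step is to note that $t^\ast\ge -t_1$ together with $-t_1\ge -T$ (which holds since $t_1<T$) places $-t_1$ inside the interval $[-T,t^\ast]$. Hence the previous conclusion yields $z\in\mathcal{F}_1(-t_1)$, contradicting the hypothesis $z\in\mathcal{F}_2(-t_1)$ because the two regions are disjoint. This contradiction establishes that no such $t^\ast$ exists, so $z\in\mathcal{F}_2(t)$ for all $t\in[-t_1,0]$, as desired.

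The main (indeed essentially the only) subtlety I anticipate is bookkeeping with the reversed time axis and the boundary cases: Lemma~\ref{lem:freeze_regions} is stated for an interior time $-t_0$ with $0<t_0<T$, so I would handle the endpoints $t^\ast=0$ and $t^\ast=-t_1$ separately, appealing to continuity of $V$ in $t$ (and hence stability of the sign of the Hamiltonian) to extend the argument to the closed interval if the open-interval hypothesis of the Lemma does not directly cover them. Apart from this, the proof is a direct logical inversion of the Lemma and introduces no new estimates.
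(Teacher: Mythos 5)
Your proposal is correct and is essentially identical to the paper's own proof: both argue by contradiction, using $\mathcal{F}_1(t)=\mathcal{F}_2^C(t)$ to place $z$ in $\mathcal{F}_1$ at the later time, then invoking Lemma~\ref{lem:freeze_regions} to propagate membership in $\mathcal{F}_1$ backward to $-t_1$, contradicting $z\in\mathcal{F}_2(-t_1)$. Your extra attention to the endpoint cases $t^\ast=0$ and $t^\ast=-t_1$ is a minor refinement the paper glosses over (it allows $t_0\in[0,t_1)$ despite the Lemma's stated range $0<t_0<T$), not a different approach.
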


\begin{proof}
Suppose $\exists t_0\in [0,t_1), z\notin\mathcal{F}_2(-t_0)$ but $z\in\mathcal{F}_2(-t_1)$. Since $\mathcal{F}_1(t)$ is the complement of $\mathcal{F}^C_2(t)$, this implies $z\in\mathcal{F}_1(-t_0)$. By Lemma \ref{lem:freeze_regions}, we must have that $z\in\mathcal{F}_1(-t_1)$ since $t_1\in[-T, -t_0]$, a contradiction.
\end{proof}

We can now state our main theorem.

\begin{thm} \label{thm:decoupled}
The solution to \eqref{eq:HJIPDE} for a decoupled system with dynamics \eqref{eq:dyn_decoupled} and terminal condition $l(z)=\max_i l_i(x_i)$ is given by

\begin{equation}
\begin{aligned}
V(t,z) &= \max_i V_i(\bar t(z), x_i) &\forall z \in \mathcal F_1(t) \\
V(t,z) &= \max_i V_i(t, x_i) &\forall z \in \mathcal F_2(t)
\end{aligned}
\end{equation}

\noindent where $V_i(t,x_i), i=1\ldots,N$ are the viscosity solutions to

\begin{equation} \label{eq:HJIPDE_Vi}
\begin{aligned}
D_t V_i(t,x_i) + \max_{u_i\in\mathcal{U}_i} \min_{d_i\in\mathcal{D}_i} D_{x_i} V_i(t,x_i) \cdot f_i(x_i,u_i,d_i) &= 0 \\
V_i(0,x_i) = l_i(x_i), &
\end{aligned}
\end{equation}

\noindent and $\bar t(z)$ is the smallest time such that $z\in \mathcal F_1(t)$, i.e.

\begin{equation}
\bar t(z) = \inf_{\tau>t} \{z\in \mathcal F_1 (\tau)\}
\end{equation}
\end{thm}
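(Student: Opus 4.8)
The plan is to reduce the theorem to two ingredients: first, that the pointwise maximum $W(t,z) := \max_i V_i(t,x_i)$ of the subsystem value functions is itself the viscosity solution of the \emph{unfrozen} equation \eqref{eq:HJIPDE_H} with terminal data $l(z)=\max_i l_i(x_i)$; and second, that the freeze-region monotonicity already proved in Lemma \ref{lem:freeze_regions} and Corollary \ref{cor:freeze_regions} lets me stitch the frozen and unfrozen regimes together so that the full solution $V$ is recovered as $W$ in $\mathcal F_2(t)$ and as a time-frozen copy of $W$ in $\mathcal F_1(t)$. Note that the subsystem PDEs \eqref{eq:HJIPDE_Vi} are deliberately the unfrozen ones (no $\min\{0,\cdot\}$ term), so all of the freezing of the full problem must be reinstated through the time argument $\bar t(z)$; verifying that this single time-shift accounts for the freezing exactly is the conceptual heart of the statement.

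For Step A I would first exploit that the decoupled structure makes the control and disturbance sets product sets, $\mathcal U=\prod_i\mathcal U_i$ and $\mathcal D=\prod_i\mathcal D_i$, and that $f$ splits componentwise. Since $D_z V\cdot f = \sum_i D_{x_i}V\cdot f_i(x_i,u_i,d_i)$ is a sum of terms over disjoint variables, the joint optimization separates,
\[
\max_{u\in\mathcal U}\min_{d\in\mathcal D} D_z W\cdot f = \sum_{i=1}^N \max_{u_i\in\mathcal U_i}\min_{d_i\in\mathcal D_i} D_{x_i} W\cdot f_i(x_i,u_i,d_i).
\]
At any point where $W$ is differentiable and the maximum defining $W$ is attained by a single index $j$, one has $D_{x_i}W=0$ for $i\ne j$ and $D_{x_j}W=D_{x_j}V_j$, so the right-hand side collapses to the $j$-th subsystem Hamiltonian, which equals $-D_t V_j = -D_t W$; hence $W$ satisfies \eqref{eq:HJIPDE_H} there, and trivially $W(0,z)=\max_i l_i(x_i)=l(z)$. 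To make this rigorous and cover the tie locus $\{V_i=V_j\}$, I would instead (or in addition) use the game representation: the unfrozen equation's solution is $\mathrm{val}\big[\max_i l_i(\xi_i(0))\big]$, and because the subsystem games are independent and a nonanticipative response in component $i$ may be built to depend only on $u_i$, Player 2 can play each subsystem's $\epsilon$-optimal strategy simultaneously. Monotonicity of the value in the payoff gives $\mathrm{val}\ge\max_i V_i$, and the product strategy gives $\mathrm{val}\le\max_i V_i+\epsilon$, yielding $W=\max_i V_i=\mathrm{val}$ and sidestepping the non-smoothness of the max.

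For Step B, take $z\in\mathcal F_2(t)$. By Corollary \ref{cor:freeze_regions}, $z\in\mathcal F_2(s)$ for every $s\in[t,0]$, so along the entire backward history of this point the full value $V$ obeys the unfrozen equation \eqref{eq:HJIPDE_H} with terminal data $l$ at $s=0$; comparing with $W$, which solves the same equation and data, a (retrograde) comparison argument on the time-monotone unfrozen region forces $V(t,z)=W(t,z)=\max_i V_i(t,x_i)$. For Step C, take $z\in\mathcal F_1(t)$. By Lemma \ref{lem:freeze_regions} the point lies in $\mathcal F_1(s)$ for all $s\le\bar t(z)$, where $D_t V=0$, so $V(\cdot,z)$ is constant in time there; since for $s$ just above $\bar t(z)$ the point is in $\mathcal F_2(s)$ we have $V(s,z)=\max_i V_i(s,x_i)$, and letting $s\downarrow\bar t(z)$ and using continuity of $V$ and each $V_i$ gives $V(\bar t(z),z)=\max_i V_i(\bar t(z),x_i)$; the time-constancy then propagates this value to all $t\le\bar t(z)$, which is exactly the first line of the theorem.

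The hard part will be Step B: I cannot simply invoke a global uniqueness theorem for the unfrozen equation, because $V$ is frozen outside $\mathcal F_2$ and therefore is \emph{not} the global viscosity solution of \eqref{eq:HJIPDE_H}. What makes the argument go through is that Corollary \ref{cor:freeze_regions} renders the unfrozen region ``downward closed in time'' (if $(s,z)$ is unfrozen then so is $(s',z)$ for all $s'\in[s,0]$), so the relevant domain of dependence never reaches into frozen data; turning this qualitative observation into a clean local comparison principle on a time-varying domain is the main technical obstacle. A secondary obstacle, if one pursues the pure PDE route for Step A rather than the game representation, is the usual care needed to certify that a maximum of viscosity solutions is both a sub- and a supersolution across the tie set $\{V_i=V_j\}$; I expect the representation-formula argument to be the more economical way to discharge it.
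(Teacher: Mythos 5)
Your proposal is structurally correct and follows the same skeleton as the paper's proof: the same two-regime split driven by Lemma \ref{lem:freeze_regions} and Corollary \ref{cor:freeze_regions}, with your Step C coinciding with the paper's Case 1 (time-constancy in $\mathcal F_1(t)$ propagating the value from the transition time $\bar t(z)$; your limit $s\downarrow\bar t(z)$ with continuity is in fact a cleaner handling of the transition instant than the paper's bare appeal to its Case 2), and your Steps A--B corresponding to the paper's Case 2. Within that skeleton, though, your treatment of the unfrozen regime genuinely differs. The paper justifies that $\max_i V_i$ solves \eqref{eq:HJIPDE_H} only by a formal classical computation: it expresses $D_t V$ and $D_z V$ through indicator functions of the maximizing index and substitutes into \eqref{eq:HJIPDE_H} to obtain \eqref{eq:HJIPDE_D}; this is meaningful only where the $V_i$ are differentiable and the argmax is unique, and it says nothing, in the viscosity sense, on the tie set $\{V_i = V_j\}$. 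Your game-representation argument --- payoff monotonicity giving $\mathrm{val} \ge \max_i V_i$, and a product of componentwise $\epsilon$-optimal nonanticipative strategies giving $\mathrm{val} \le \max_i V_i + \epsilon$ --- is a different route with no counterpart in the paper, and it is precisely what discharges the tie-set weakness; this is what your approach buys. Conversely, the obstacle you flag in Step B (identifying $V$ with $\max_i V_i$ on $\mathcal F_2(t)$ when $V$ is not a global solution of \eqref{eq:HJIPDE_H}) is not actually resolved by the paper either: it introduces auxiliary functions $W_i$ solving \eqref{eq:HJIPDE_Wi} globally and then asserts ``by Corollary \ref{cor:freeze_regions}'' that $V_i \equiv W_i$, which is exactly your qualitative downward-closed-in-time observation stated as a conclusion rather than proved. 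Your worry is well founded: Corollary \ref{cor:freeze_regions} controls only vertical time segments, while a comparison argument needs the cone of dependence to stay in the unfrozen region, so the missing local comparison principle is a gap in the paper's own proof, not an idea you failed to find.
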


\begin{proof}
\textbf{Case 1}: By Lemma \ref{lem:freeze_regions}, we have $\forall z\in \mathcal F_1(t), z\in \mathcal F_1(\tau) \forall \tau \le \bar t(z)$. Therefore, $V(t,z)$ satisfies \eqref{eq:HJIPDE_0} $\forall t\le \bar t(z)$, so $V(t,z) = V(\bar t(z), z)$. Case 2 of this proof would then imply $V(\bar t(z), z) = \max_i V_i(\bar t(z), x_i)$.

\textbf{Case 2}: Consider a target set represented by the zero sublevel set of the function $l(z)$, where $l(z)=\max_i l_i(x_i)$. By \eqref{eq:HJIPDE}, we have that $V(0,z) = \max_i l_i(x_i)$. Define functions $V_i(t,x_i)$ such that $V_i(0,x_i)=l_i(x_i)$, then at $t=0$, we have $V(t,z) = \max_i V_i(t,x_i)$, and

\begin{equation}
\begin{aligned}
D_t V(t,z) &= \sum_i 1\{i = \arg \max_i V_i(t,x_i) \}D_t V_i(t,x_i) \\
D_z V(t,z) &= \sum_i 1\{i = \arg \max_i V_i(t,x_i) \} I_{x_i} D_{x_i} V_i(t,x_i) \\
\end{aligned}
\end{equation}

\noindent where $1\{\cdot\}$ is the indicator function that is $1$ when its argument is true and $0$ otherwise, and $I_{x_i}$ is an matrix in $\mathbb R^{n\times n_i}$ of all zeros except for in the rows corresponding to the $x_i$ component where it is the identity matrix in $\mathbb{R}^{n_i\times n_i}$.

Now, consider all points $z\in\mathcal{F}_2(t)$, in which $V(t,z)$ satisfies \eqref{eq:HJIPDE_H}. Substituting $V(t,z) = \max_i V_i(t,x_i)$ into \eqref{eq:HJIPDE_H}, we have

\begin{equation} \label{eq:HJIPDE_D}
\begin{aligned}
\sum_i 1\{i = \arg \max_i V_i(t,x_i) \}\big[D_t V_i(t,x_i) +\\
 \max_{u_i\in\mathcal{U}_i} \min_{d_i\in\mathcal{D}_i} D_{x_i} V_i(t,x_i) \cdot f_i(x_i,u_i,d_i)\big] &= 0.
\end{aligned}
\end{equation}

Equation \eqref{eq:HJIPDE_D} states that in the region where $V_i(t,x_i)$ is the maximum among $\{V_j(t,x_j)\}_{j=0}^N$, we have $V(t,z) = V_i(t,x_i)$, where $V_i(t,x_i)$ satisfies \eqref{eq:HJIPDE_Vi}.

Consider auxiliary functions $W_i(t,x_i)$ which satisfy, for all $t\in[-T,0]$ and all $x_i$,
\begin{equation} \label{eq:HJIPDE_Wi}
\begin{aligned}
D_t W_i(t,x_i) + \max_{u_i\in\mathcal{U}_i} \min_{d_i\in\mathcal{D}_i} D_{x_i} W_i(t,x_i) \cdot f_i(x_i,u_i,d_i) &= 0 \\
W_i(0,x_i) = l_i(x_i). &
\end{aligned}
\end{equation}

By Corollary \ref{cor:freeze_regions}, we have that $V_i(t,x_i)$ and $W_i(t,x_i)$ both satisfy the same PDE with the same terminal conditions, $i=1\ldots,N$. Therefore, $V_i(t,x_i)=W_i(t,x_i),\forall i=1,\ldots,N$.
\end{proof}

\subsection{Decoupled Formulation Algorithm} \label{subsec:alg}
Algorithmically, Theorem \ref{thm:decoupled} states the following:
\begin{enumerate}
\item $D_t V(t_0,z) = 0$ for some $t_0 \Rightarrow D_t V(t,z) = 0 \ \forall t\in[-T,t_0]$.
\item $\forall z\in \mathcal{F}_1(t), D_t V(t,z) = 0$.
\item $\forall z\in \mathcal{F}_2(t), V(t,z) = \max_i V_i(t,x_i)$ where $V_i(t,x_i)$ satisfies \eqref{eq:HJIPDE_Vi}.
\end{enumerate}

This gives us an efficient way to computed $V(t,z)$ by computing $V_i(t, x_i), i=1,\ldots,N$, transforming the original $n$-dimensional problem of computing $V(t,z)$ into the $N$ $n_i$-dimensional problems of computing $V_i(t,x_i),i=1\ldots,N$. Based on the conclusions we drew, the following algorithm \textit{exactly} computes $V(t,z)$, which satisfies \eqref{eq:HJIPDE}, with the above-mentioned computation benefits:

\begin{enumerate}
\item Initialize $V_i(t,x_i)=l_i(x_i), i=1\ldots,N$.
\item Compute $V_i(t,x_i), i=1\ldots,N$, by solving \eqref{eq:HJIPDE_Vi}.
\item Initialize $V(t,z)=l(t,z)=\max_i V_i(t,x_i)$.
\item Decrement $t$ from 0 to $-T$; for each time step $\bar{t}$:
\begin{enumerate}
\item Set the auxiliary variable 
\begin{equation*}
\tilde{V}(\bar{t},z) \leftarrow \max_i V_i(\bar{t}, x_i)
\end{equation*}
This step correctly computes $V(\bar{t},z)$ to be $\tilde{V}(\bar{t},z)$ for all $z\in\mathcal{F}_2(\bar{t})$.
\item Update the value function 

\begin{equation*}
V(\bar{t},z) \leftarrow \min\{V(\bar{t},z), \tilde{V}(\bar{t},z)\}
\end{equation*}

This step correctly computes $V(\bar{t},z)$ to satisfy $D_t V(t,z) = 0$ for all $z\in\mathcal{F}_1(\bar{t})$.
\end{enumerate}
\end{enumerate}

\subsection{Computation Time and Space Complexity Comparison}
For a state space discretization of $k$ grid points in each dimension, the computation time complexity decreases from $O(k^n)$ for the original problem in $\mathbb{R}^n$, to $O(\sum_i k^{n_i}) = O(k^{\max_i n_i})$ for the $N$ subproblems in $\mathbb{R}^{n_i}$. This is a computation speed improvement of many orders of magnitude. 

Directly solving \eqref{eq:HJIPDE} on a computational domain $\mathcal{S}\subset \mathbb{R}^n$ has a space complexity of $O(\tau k^n)$, where $\tau$ is the number of time steps of $V(t,z)$ being stored, since we need to store an $n$-dimensional grid for each of the $\tau$ time steps. The algorithm presented in \ref{subsec:alg} involves computing $V_i(t,x_i)$ on computation domains $\mathcal{X}_i\subset\mathbb{R}^{n_i},i=1,\ldots,N$. Each $V_i(t,x_i)$ thus has a space complexity of $O(\tau k^{n_i})$, making the overall space complexity $O(\tau k^{\max_i n_i})$.

From $V_i(t,x_i)$, we can then reconstruct $V(t,z)$ in any domain $\mathcal{Z} \subset \mathcal{X}_1 \times \mathcal{X}_2 \times \ldots \times \mathcal{X}_N$. Thus, by choosing $\mathcal{Z}$ to be a small subset of $\mathcal{X}_1 \times \ldots \times \mathcal{X}_N$, we can always avoid additional space complexity. This allows us to access $V(t,z), z\in\mathcal{Z}$. In practice, one would choose $\mathcal{Z}$ to be in a small region around a state $z$ of interest (eg. the current system state), and access the value function $V(t,z)$ as well as its gradient $D_z(t,z)$ at $z$; this allows one to determine of whether $z$ is in the reachable set based on the sign of $V(t,z)$, and compute the optimal controls $u(t), d(t)$ for both Player 1 and Player 2 respectively based on $D_z V(t,z)$ as follows:

\begin{equation} \label{eq:ctrl_syn}
\begin{aligned}
u^* &= \arg \max_{u\in\mathcal{U}} \min_{d\in\mathcal{D}} D_z V(t,z) \cdot f(z,u,d) \\
d^* &= \arg \min_{d\in\mathcal{D}} D_z V(t,z) \cdot f(z,u^*,d)
\end{aligned}
\end{equation}

Note that we do \textit{not} need to store $V(t,z)$ for all $z \in \mathcal{S} = \mathcal{X}_1 \times \ldots \times \mathcal{X}_N$. In fact, in many situations, storing $V(t,z)$ in the entire $\mathcal{S}$ is infeasible since the space complexity is exponential with the dimension of $\mathcal{S}$. With that caveat, we restate our algorithm for computing $V(t,z)$ from $V_i(t,x_i), i=1,\ldots,N$, explicitly noting memory allocation, to show that one only needs to store $V(t,z)$ for $z\in\mathcal{Z}\subset\mathcal{S}$, where $\mathcal{Z}$ is a very small subset of $\mathcal{S}$:

\begin{enumerate}
\item Initialize $V_i(t,x_i)=l_i(t,x_i)$ for $x_i\in\mathcal{X}_i, i = 1,\ldots,N$.
\item Compute $V_i(t,x_i)$ in $\mathcal{X}_i,i=1\ldots,N$ by solving \ref{eq:HJIPDE_Vi}.
\item Initialize $V(t,z)=l(t,z)$ in a small computation domain $\mathcal{Z}\subset\mathcal{X}_1\times\ldots\times\mathcal{X}_N$.
\item Decrement $t$ from 0 to $-T$; for each time step $\bar{t}$, perform the following computations in $\mathcal{Z}$:
\begin{enumerate}
\item $\tilde{V}(\bar{t},z) \leftarrow \max_i V_i(\bar{t}, x_i)$
\item $V(\bar{t},z) \leftarrow \min\{V(\bar{t},z), \tilde{V}(\bar{t},z)\}$
\end{enumerate}
\end{enumerate}

\subsection{Numerical Implementation}
Our proposed decoupled formulation involves solving \eqref{eq:HJIPDE_Vi} for each of the $N$ subsystems. As already mentioned, many numerical tools already exist for solving \eqref{eq:HJIPDE_Vi}; we will use the implementation in \cite{LSToolbox}. For the examples in this paper, we used the numerical schemes below.

For the numerical Hamiltonian $H(D_{x_i}V_i, x_i)=\max_{u\in\mathcal{U}} \min_{d\in\mathcal{D}} D_z V(t,z) \cdot f(z,u,d)$ in \eqref{eq:HJIPDE_Vi}, we used the Lax-Friedrich approximation \cite{Osher91}. For the numerical spatial derivatives $D_{x_i} V_i(t,x_i)$, we used a fifth-order accurate weighted essentially non-oscillatory scheme \cite{Osher91,Osher03}. For numerical time derivatives $D_t V_i(t,x_i)$, we used a third-order accurate total variation diminishing Runge-Kutta scheme \cite{Osher03, Shu88}. 

Computations were done on a computer with an Intel Core i7-2600K CPU running at 3.4 GHz, with 16 GB of memory.
% !TEX root = decoupledHJI.tex
\section{4D Quadrotor Collision Avoidance}
\label{sec:results}
Consider a simple quadrotor model consisting of two decoupled double-integrators:

\begin{equation}
\begin{aligned}
\dot{p}_x &= v_x, \qquad \dot{p}_y = v_y  \\
\dot{v}_x &= u_x, \qquad \dot{v}_y = u_y  \\
\underline{u} &\le |u_x|,|u_y| \le \bar{u}
\end{aligned}
\end{equation}

$p_x,p_y$ denote the $x$- and $y$-position of the quadrotor, and $v_x,v_y$ denote the $x$- and $y$-velocity. The control signals $u_x,u_y$ are the $x$- and $y$-acceleration of the quadrotor, constrained to be between $\underline{u}$ and $\bar{u}$.

Now, consider two quadrotors in a pursuit-evasion game, in which the evader (Player 1) aims to avoid collision, while the pursuer (Player 2) aims to cause a collision. The relative coordinates of the two quadrotors are given by the following state variables:

\begin{equation}
\begin{aligned}
p_{x,r} &= p_{x,i} - p_{x,j}, \qquad p_{y,r} = p_{y,1} - p_{y,2}\\
v_{x,r} &= v_{x,i} - v_{x,j}, \qquad v_{y,r} = v_{y,1} - v_{y,2}
\end{aligned}
\end{equation}

Given the above relative state variables, the relative dynamics of the two quadrotors are given by

\begin{equation}
\begin{aligned}
\dot{p}_{x,r}& = v_{x,r}, &\dot{p}_{y,r} &= v_{y,r} \\
\dot{v}_{x,r}& = u_{x,1} - u_{x,2}, &\dot{v}_{y,r} &= u_{y,1} - u_{y,2}\\
\end{aligned}
\end{equation}

Note that this system is decoupled, with $x_1 = (p_{x,r}, v_{x,r})\in\mathbb{R}^2$ as the first decoupled component, and $x_2 = (p_{y,r}, v_{y,r})\in\mathbb{R}^2$ as the second decoupled component. In the relative coordinates $z:=(p_{x,r}, v_{x,r}, p_{y,r}, v_{y,r})\in\mathbb{R}^4$ of the two quadrotors, we define the collision set of size $1$, representing the configurations in which the two quadrotors are considered to have collided, as the following set:

\begin{equation}
\mathcal{L} = \{z \in \mathbb{R}^4 \mid |p_{x,r}|, |p_{y,r}|\le 1\}
\end{equation}

\noindent with the corresponding implicit surface function $l(z)$ where $l(z)\le 0 \Leftrightarrow z\in\mathcal{L}$. Since we have a decoupled system, let $\mathcal{L}_i, i = 1,2$ be the following sets:

\begin{equation}
\begin{aligned}
\mathcal{L}_1 &= \{x_1 \in \mathbb{R}^2 \mid |p_{x,r}|\le 1\} \\
\mathcal{L}_2 &= \{x_2 \in \mathbb{R}^2 \mid |p_{y,r}|\le 1\}
\end{aligned}
\end{equation}

\noindent with corresponding implicit surface functions $l_i(x_i),i=1,2$. Then, we have $\mathcal{L} = \mathcal{L}_1 \cap \mathcal{L}_2$ and $l(z) = \max_i l_i(x_i),i=1,2$.

We will set $\mathcal{L}$ as the target set in our reachability problem, and compute the backwards reachable set $\mathcal{V}(t)$ from $\mathcal{L}$ using three methods:
\begin{itemize}
\item Solve \eqref{eq:HJIPDE} directly in $\mathbb{R}^4$ to obtain $V(t,z)$, whose zero sublevel set represents $\mathcal{V}(t)$.
\item Solve \eqref{eq:HJIPDE_Vi} in $\mathbb{R}^2$, $i=1,2$, to obtain $V(t,z)$ using our proposed decoupled formulation described in Section \ref{subsec:decoupled}.
\item Compute the analytic boundary of the reachable set.
\end{itemize}

For comparison purposes, for the first two methods we will compute $V(t,z)$ on the computation domain $[-5, 5]^4$. However, it is important to recall that for our proposed method described in Section \ref{subsec:decoupled}, we can significantly reduce space complexity by only storing a small part of $V(t,z)$.

\subsection{Reachable Set}
Since the state space of our system is 4D, we visualize various $(v_{x,r},v_{y,r})$ slices of the reachable set, whose boundary is given by $\{z\mid V(t=1.5,z)=0\}$. Figures \ref{fig:reach} shows these slices. The reachable set boundary computed using our proposed decoupled method is very close to the reachable set boundary computed by solving the full PDE \eqref{eq:HJIPDE} in $\mathbb{R}^4$ and to analytic reachable set boundary. Figures \ref{fig:reach_zoom} zooms in on the plots for a closer look.

\begin{figure}
	\centering
	\includegraphics[width=0.35\textwidth]{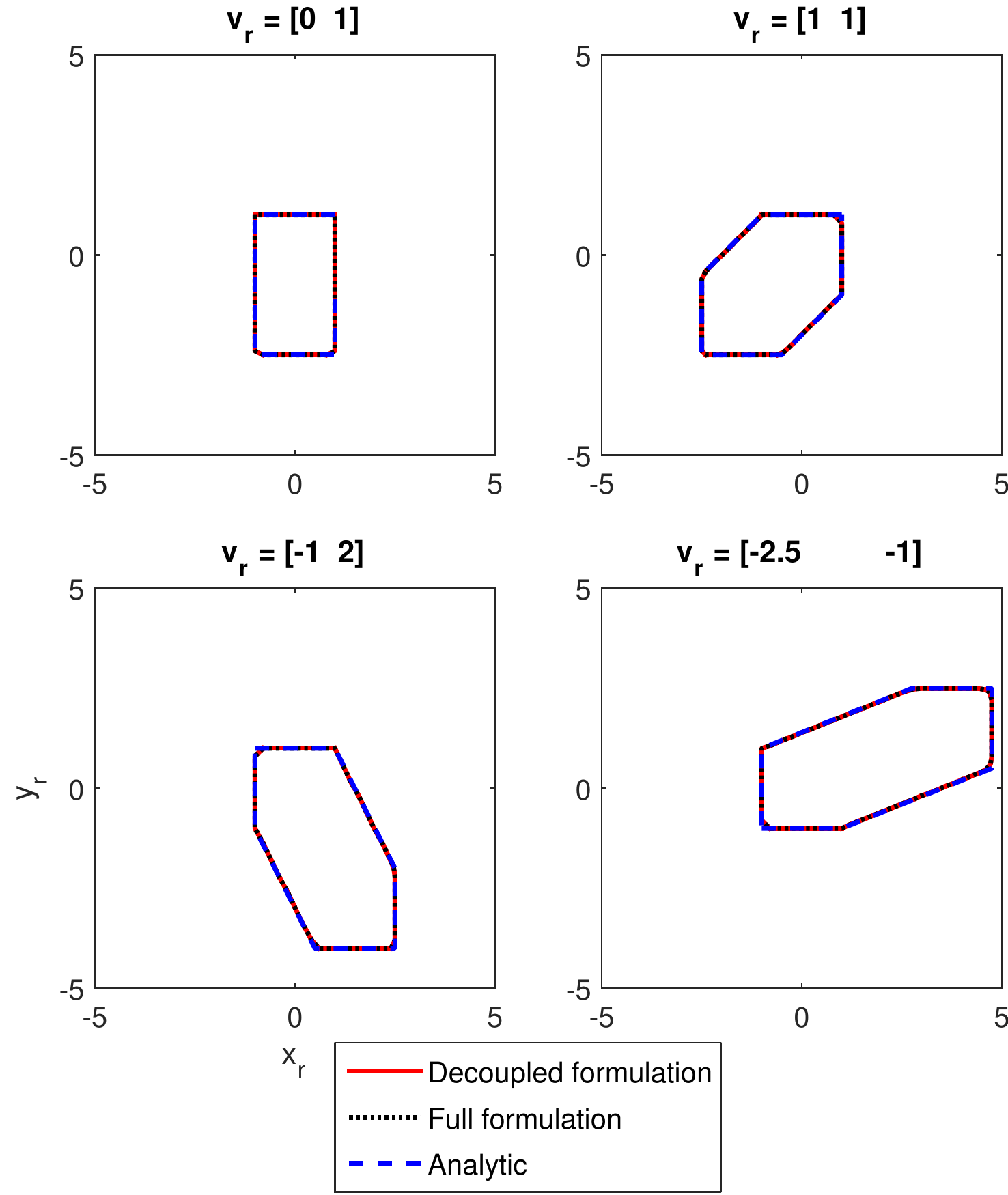}
	\caption{Various $(v_{x,r},v_{y,r})$ slices of the reachable set.}
	\label{fig:reach}
\end{figure}

\begin{figure}
	\centering
	\includegraphics[width=0.35\textwidth, trim=0 0 150 250, clip = true]{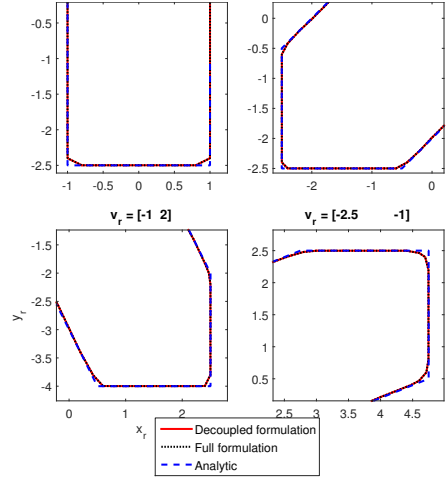}
	\caption{Various $(v_{x,r},v_{y,r})$ slices of the reachable set, zoomed in.}
	\label{fig:reach_zoom}
\end{figure}

\subsection{Performance}
In order to quantify the computation error, we converted $V(t,z)$ into a signed distance function $V_{sd}(t,z)$ from the boundary $V(t,z)=0$. This operation was first proposed in \cite{Chopp93} and can be done by solving the reinitialization PDE formulated in \cite{Sussman98}; for this operation, we use the implementation in \cite{LSToolbox}. We then evaluated approximately 24 million analytically-computed reachable set boundary points on the $V_{sd}(t,z)$; the resulting values represent how far each of the analytically-computed points are from the numerically-computed boundary. The values of $V_{sd}(t,z)$ on analytic boundary points are defined as the computation error. 

Figure \ref{fig:convergence} shows the error as a function of grid spacing. In terms of the maximum error (red curve), the decoupled formulation results in a numerically-computed reachable set boundary that is accurate within the size of the grid spacing (black line). On average, the error is approximately an order of magnitude smaller than the size of grid spacing (blue curve). Furthermore, we can see numerical convergence to the analytic solution as the grid spacing size decreases.

Figure \ref{fig:compTime} shows the computation time as a function of the number of grid points in each dimension. Here, we can see that the decoupled formulation is orders of magnitude faster than the full formulation, and can be done with many more grid points in each dimension. Lastly, the slopes of curves in the log-log plot show an $O(k^4)$ time complexity for the full formulation, and only $O(k^2)$ for the decoupled formulation. 

For the decoupled formulation, when we reconstruct the full value function in 4D (blue curve), the computation time hardly increases compared to when we do not perform full reconstruction (green curve). However, in general, we recommend that the value function in only a region near a state of interest should be computed. Without full reconstruction of the value function, we are able to obtain results with many more grid points (green curve), improving the accuracy of the numerical computation.

\begin{figure}
	\centering
	\includegraphics[width=0.25\textwidth]{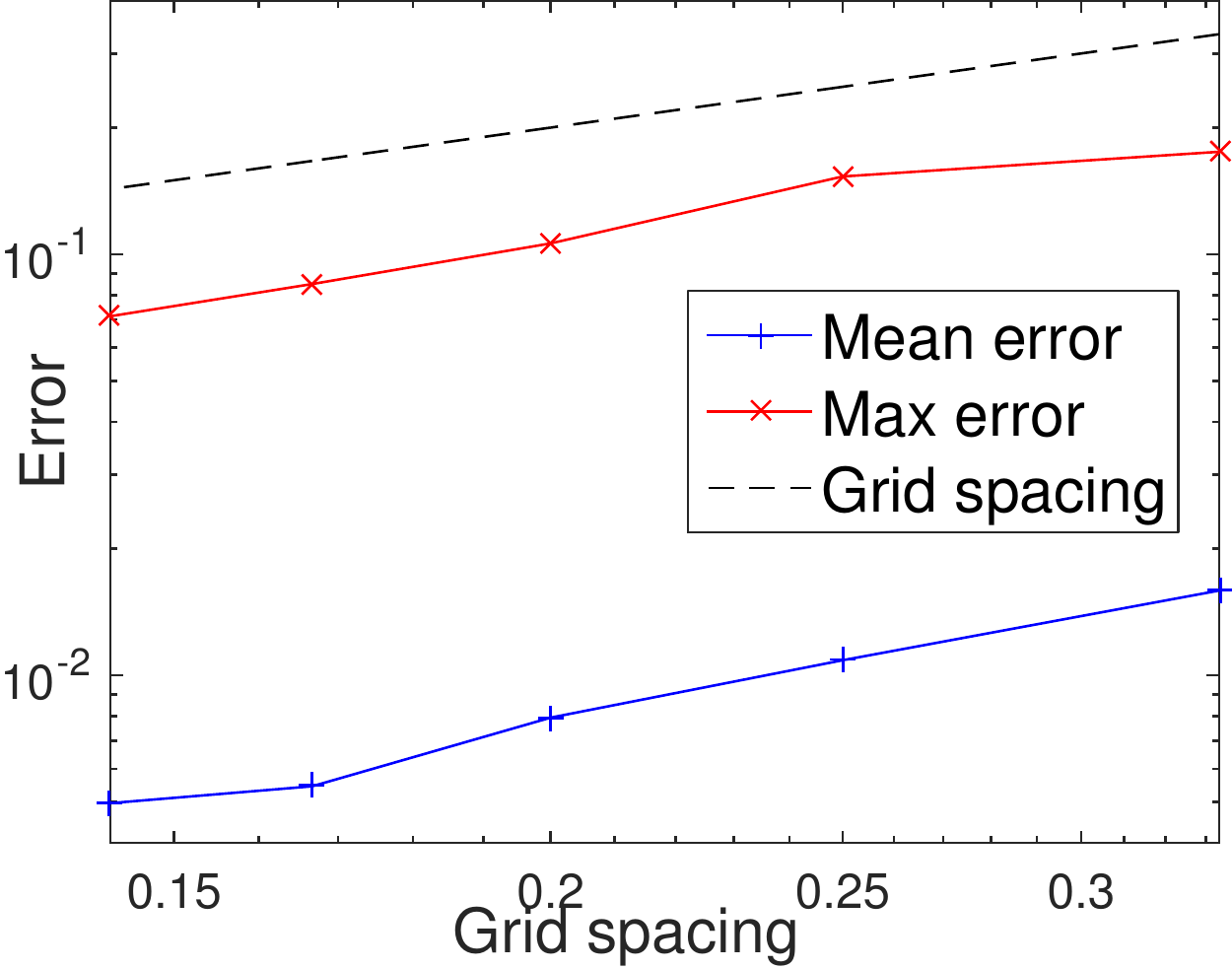}
	\caption{Mean and maximum error of the reachable set computed using the decoupled formulation, as a function of the grid spacing.}
	\label{fig:convergence}
\end{figure}

\begin{figure}
	\centering
	\includegraphics[width=0.3\textwidth]{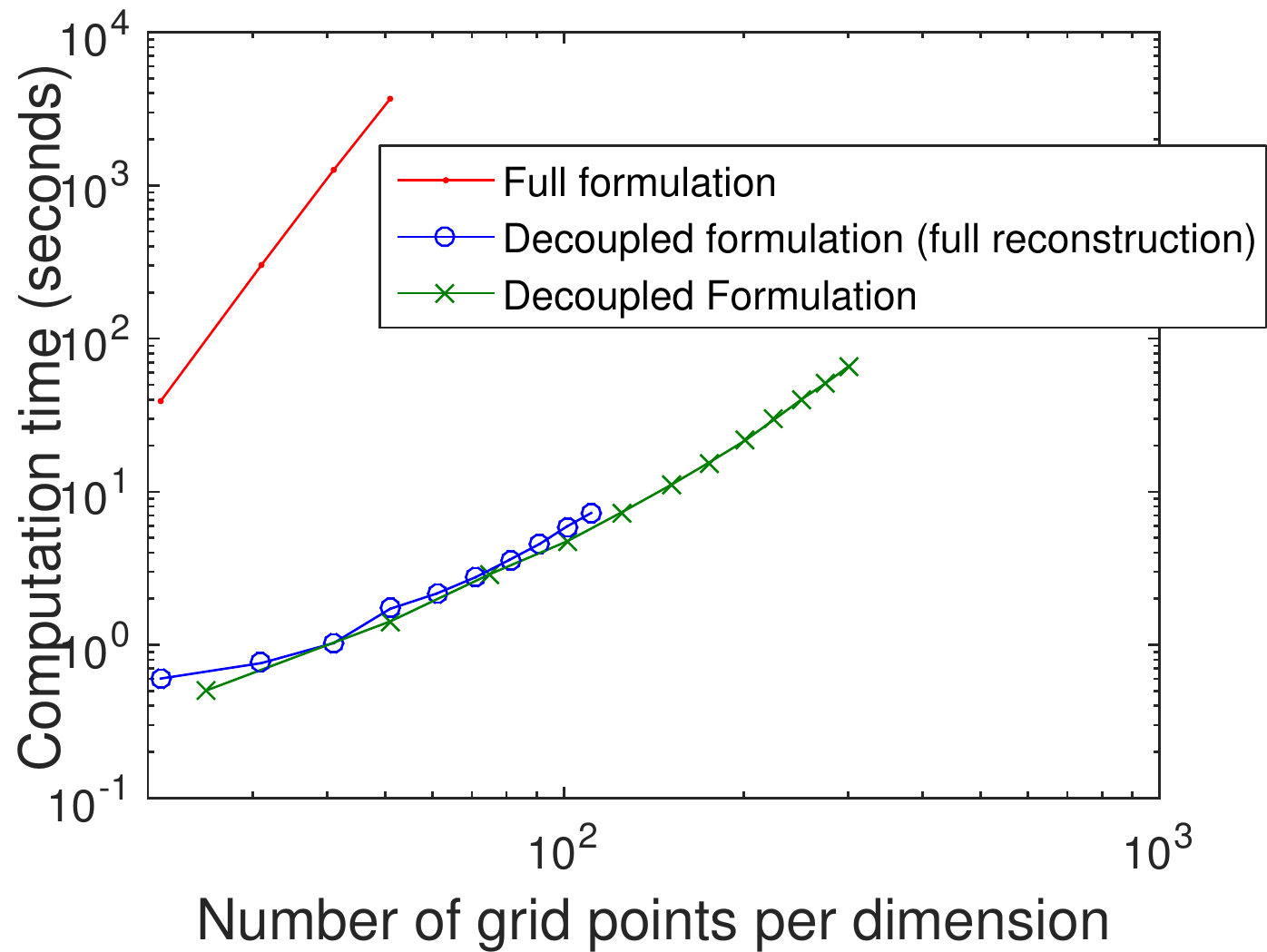}
	\caption{Computation time as a function of the number of grid points in each dimension.}
	\label{fig:compTime}
\end{figure}

\section{6D Quadrotor Collision Avoidance}
Consider relative dynamics augmented by the velocity of evader quadrotor, given in Equation \eqref{eq:rel_dyn_aug}. These dynamics are needed to impose a velocity limit on the quadrotor.

\begin{equation} \label{eq:rel_dyn_aug}
\begin{aligned}
\dot{p}_{x,r} &= v_{x,r}, &\dot{p}_{y,r} &= v_{y,r} \\
\dot{v}_{x,r} &= u_x - d_x, &\dot{v}_{y,r}&= u_y - d_y\\
\dot{v}_{x,1} &= u_x, &\dot{v}_{y,1} &= u_y \\
\end{aligned}
\end{equation}

For this system, we consider a collision between the two quadrotors, as defined previously, to be unsafe configurations. Here, we denote this set $\mathcal{L}_C$

\begin{equation}
\mathcal{L}_C = \{z \in \mathbb{R}^6 \mid |p_{x,r}|\le d, |p_{y,r}|\le 2\}
\end{equation}

\noindent with corresponding implicit surface function $l_C(z)$. 

We also consider configurations in which Player 1 is exceeding a velocity limit of $5$ in the $x$- or $y$- directions to be unsafe. This set of configurations is denoted $\mathcal{L}_S$:
\begin{equation}
\mathcal{L}_S = \{z \in \mathbb{R}^6 \mid |v_{x,1}|\ge5 \vee |v_{y,1}|\ge5\}
\end{equation}

We define the target set $\mathcal L$ to be the union of the above configurations; $\mathcal L$ represents all unsafe configurations. We will compute the reachable set $\mathcal V$ from the target set $\mathcal L$ as follows:

\begin{enumerate}
\item Compute $\mathcal V_C(t)$, the reachable set from target set $\mathcal L_C$.
\item Compute $\mathcal V_S(t)$, the reachable set from target set $\mathcal L_S$.
\item Take the union to obtain $\mathcal V(t) = \mathcal V_C(t) \cup \mathcal V_S(t)$.
\end{enumerate}

\subsection{Reachable set}
Taking the union $\mathcal V=\mathcal V_S(t) \cup \mathcal V_C(t)$, we obtain the 6D reachable set. To visualize $\mathcal V(t)$, we compute 2D slices of the 6D reachable set at various $(v_{x,r}, v_{x,1}, v_{y,r}, v_{y,1})$ values. This is done \textit{without} computing the entire 6D reachable set by setting the computation domain $\mathcal Z$ to be in a large portion of the $(p_{x,r}, p_{y,r})$ plane, at a small range of $(v_{x,r}, v_{x,1}, v_{y,r}, v_{y,1})$ values. The 2D slices at shown in Figure \ref{fig:2x3Dreach}. Each subplot shows two different pairs $(v_{x,1}, v_{y,1})$ for a particular $(v_{x,r}, v_{y,r})$. The red boundary represents the slice with $(v_{x,1}, v_{y,1})=(0,0)$, while the blue boundary represents a slice with the evader velocity almost exceeding the limit of $\bar v = 5$. The blue boundaries contain the red ones, because if the evader is already near the velocity limit, it would have more limited capability to avoid collisions.

\begin{figure}
	\centering
	\includegraphics[width=0.35\textwidth]{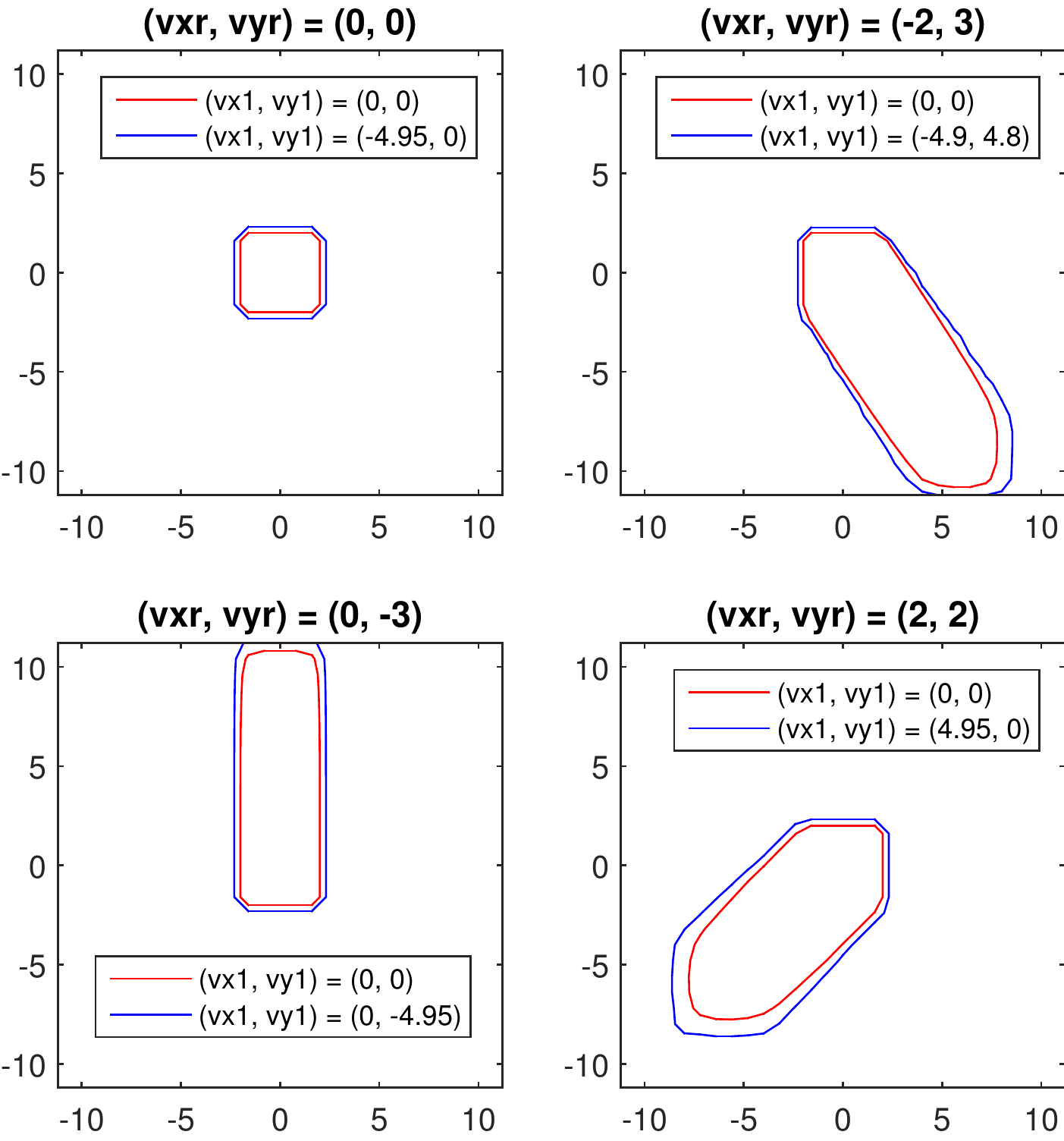}
	\caption{2D slices of the 6D reachable set for the augmented relative dynamics of two quadrotors.}
	\label{fig:2x3Dreach}
\end{figure}

Figure \ref{fig:2x3DreachSim} shows a simulation of the collision avoidance maneuver resulting from the reachable set. The red and blue quadrotors are initially traveling in opposite directions. Consider the situation in which the red quadrotor insists on staying on its intended path. In this case, the blue quadrotor must perform the optimal avoidance control whenever it reaches the boundary of the reachable set ($t=1,3$), shown as the blue dotted boundary. When the blue quadrotor is no longer at the boundary of the reachable set ($t=5$), it is free to perform any control.

%The optimal avoidance maneuver can be computed with a small space complexity as follows:
%
%\begin{enumerate}
%\item Compute $V(t,z)$ at small neighbor around a state of interest $z$.
%\item Convert $V(t,z)$ to a signed distance function $V_{sd}(t,z)$ from the boundary $V(t,z)=0$.
%
%This step is needed to normalize the relative importance of collision, represented by $V_C(t,z)$ and of the velocity limit violation, represented by $V_S(t,z)$.
%
%\item Compute $D_z V_{sd}(t,z)$, and obtain the optimal control according to \eqref{eq:ctrl_syn}.
%\end{enumerate}

\begin{figure}
	\centering
	\includegraphics[width=0.4\textwidth]{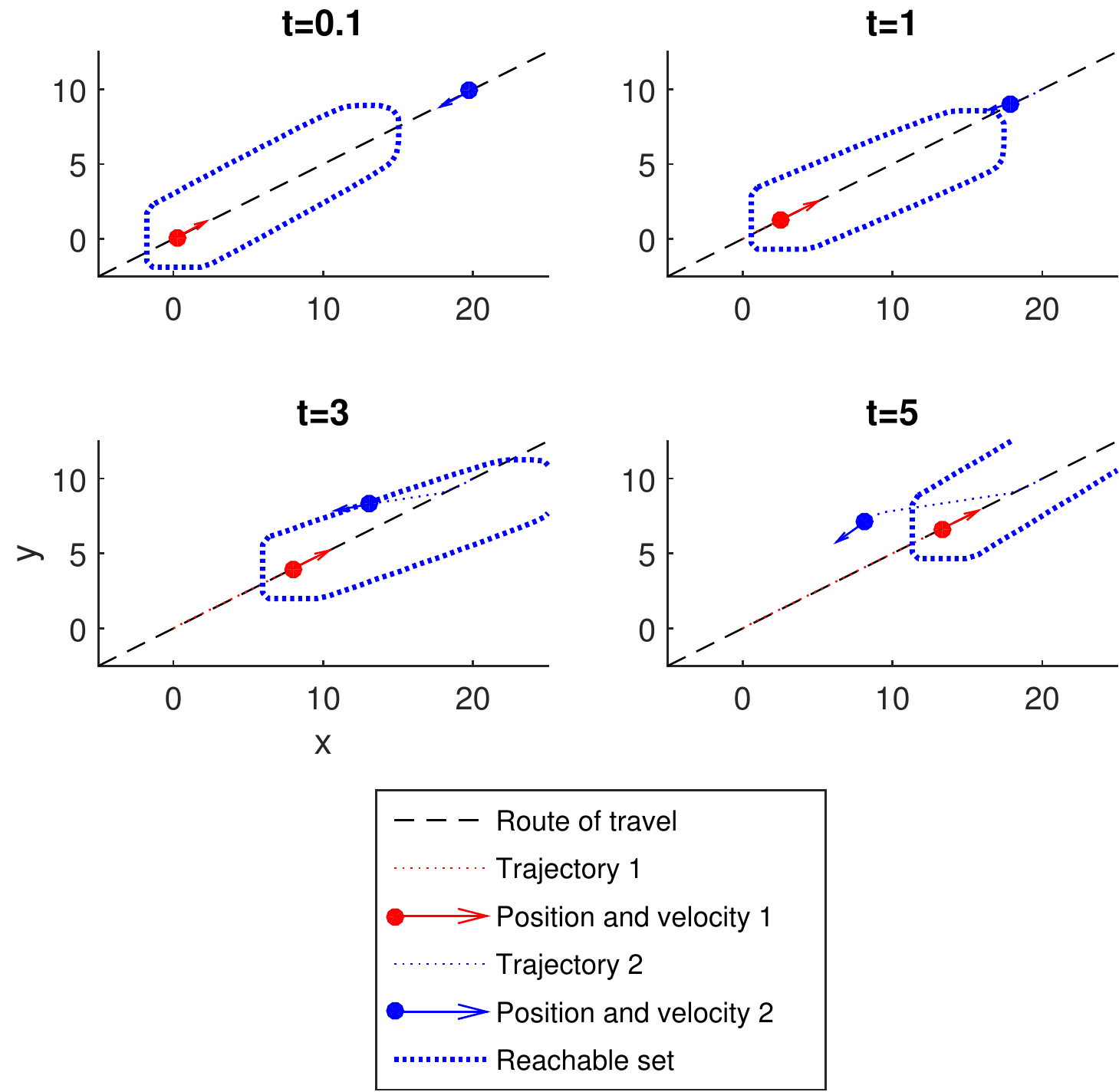}
	\caption{A simulation of collision avoidance between the two quadrotors using the 6D reachable set.}
	\label{fig:2x3DreachSim}
\end{figure}
% !TEX root = decoupledHJI.tex
\section{Conclusions and Future Work} \label{sec:conc}
We have presented a decoupled formulation of the HJ reachability, enabling us to efficiently compute the viscosity solution of the HJ PDE \eqref{eq:HJIPDE}, which gives the reachable set for optimal control problems and differential games. When the system dynamics are decoupled, our decoupled formulation allows the exact reconstruction of the solution to the full dimensional HJ PDE by solving the HJ PDEs corresponding to each of the decoupled components. Our novel approach enables the analysis of otherwise intractable problems. In addition, our formulation achieves this without sacrificing \textit{any} optimality compared to the full formulation. We demonstrated the benefits of our approach using a 4D and 6D quadrotor system.

\bibliographystyle{IEEEtran}
\bibliography{references}

\end{document}